\def\R{\mathbb{R}}
\def\N{\mathbb{N}}
\def\C{\mathbb{C}}
\renewcommand{\geq}{\geqslant}
\renewcommand{\leq}{\leqslant}
\newcommand{\Jcal}{\mathcal{J}}
\newtheorem{theorem}{Theorem}[section]
\newtheorem{lemma}{Lemma}[section]
\theoremstyle{definition}
\theoremstyle{definition}\newtheorem{remark}{Remark}[section]
\title{Spiraling of sub-Riemannian  geodesics around the Reeb flow in the 3D contact case}
\date{}
\author{Yves Colin de Verdi\`ere\footnote{Universit\'e de Grenoble-Alpes,
Institut Fourier, Unit{\'e} mixte de recherche CNRS-UJF 5582, BP 74, 38402-Saint Martin d'H\`eres Cedex, France 
(\texttt{yves.colin-de-verdiere@univ-grenoble-alpes.fr}).}
\and
Luc Hillairet \footnote{Universit\'e d'Orl\'eans, Institut Denis Poisson, route de Chartres,
 45067 Orl\'eans Cedex 2, France (\texttt{luc.hillairet@univ-orleans.fr}). }
\and
Emmanuel Tr\'elat\footnote{Sorbonne Universit\'e, CNRS, Universit\'e de Paris, Inria, Laboratoire Jacques-Louis Lions (LJLL), F-75005 Paris, France (\texttt{emmanuel.trelat@sorbonne-universite.fr}).}
}
\begin{document}

\maketitle

\begin{abstract}
We consider a closed three-dimensional contact sub-Riemannian manifold.
The objective of this note is to provide a precise description of the sub-Riemannian geodesics with large initial momenta:
 we prove that they ``spiral around the Reeb orbits'', not only in the phase space but also in the configuration space.
Our analysis is based on a normal form along any Reeb orbit due to Melrose.
\end{abstract}

\section{Introduction and main result}
Let $(M,D,g)$ be a closed sub-Riemannian (sR) manifold of dimension $3$, where $D$ is a contact distribution endowed with a Riemannian metric $g$.
We assume for simplicity that $D$ is oriented, and we denote by $\alpha_g$ the unique one-form defining $D$ so that ${d\alpha _g} _{\vert D} $ is the volume form induced by the metric $g$ on $D$. The Reeb vector field $Z$ is then characterized by the relations $\alpha _g (Z)=1$ and $d\alpha _g(Z,\cdot)=0$.
Equivalently, given any positive $g$-orthonormal local frame $(X,Y)$ of $D$, $Z$ is the unique vector field such that
 $[X,Z]\in D$, $[Y,Z]\in D$ and $[X,Y]=-Z\mod D$.

We recall that the cometric $g^\star :T^\star M \rightarrow \R^+$ associated with the sR metric $g$ is defined by
$$
g^\star (q,p)= \Vert p_{| D(q)} \Vert_{g(q)}^2 .
$$

The Hamiltonian $G=\sqrt{g^\star}$, which is homogeneous of degree $1$, generates the sR geodesic flow: the projections onto $M$
 of the integral curves of the associated Hamiltonian vector field $\vec{G}$ are the sR geodesics with speed $1$.
Note that the function $G$ is not smooth along the line bundle $\Sigma=G^{-1}(0)=D^\perp$ (the annihilator of $D$).
The geodesic flow $G_t =\exp(t\vec{G})$ is homogeneous of degree $0$, and thus is defined and smooth on $S^\star M \setminus S\Sigma$.
Here, $S^\star M$ is the unit cotangent bundle, and $S\Sigma$ is the sphere bundle of $\Sigma$ (quotient of $\Sigma $ 
by positive homotheties).

The Reeb vector field $Z$ has the following dynamical interpretation. If $v_0\in D(q_0)$, then there exists a one-parameter family
 of geodesics associated with the Cauchy data $(q_0,v_0)$, all of them being the projections of the integral curves of $\vec G$ with
 Cauchy data $(q_0, p_0)$ and $( p_0) _{\vert D_{q_0}}=g(v_0,\cdot)$. For every $s\in\R$, the projections on $M$ of the integral curves
 of $\vec G$ with Cauchy data $(q_0, p_0+s\alpha_g )$ in the cotangent space have the same Cauchy data $(q_0,v_0)$ in the tangent space.
 As $s\rightarrow \pm\infty $, they spiral around the integral curves of $\mp Z$. 

In this paper, our objective is to give a precise description of these sR geodesics with large initial momenta: we will explain
 in what precise sense they spiral around the Reeb orbits. 

To establish this feature, we will use an exact local normal form along an arc of a Reeb orbit. This result is originally due to
 Melrose (see \cite[Proposition 2.3]{Me-84}); his proof is however rather sketchy and we will give here a full proof of it (which
 is far from being obvious) in Section \ref{sec:Melrose}.

In Section \ref{sec:spiraling}, we will deduce from this normal form the spiraling property of sR geodesics around Reeb trajectories,
 not only in the phase space $T^\star M$ (the cotangent bundle of $M$), but also in the configuration space $M$. 
We will then be in a position to explain more precisely what \emph{spiraling} means.

\section{Melrose normal form along a Reeb orbit}\label{sec:Melrose}
We denote by $\omega$ the canonical symplectic form of $T^\star M$. In local symplectic coordinates $(q,p)$, we have $\omega=dq\wedge dp$ (see Appendix \ref{app:symplectic} for the sign conventions that are used here and throughout).
The codimension-$2$ manifold $\Sigma$ is a symplectic subcone of $T^\star M$, endowed with the restriction $\omega_{\vert\Sigma}$. Note that
$$
\Sigma = \{ (q,s\,\alpha_g(q))\in T^\star M\ \mid\ q\in M, s\in\R\}.
$$
We define the Hamiltonian $\rho :\Sigma \rightarrow \R$ by $\rho (s\,\alpha _g)=|s|$. The projection onto $M$ of the Hamiltonian vector field $\vec{\rho}$ is $\pm Z$, depending on the sign of $s$. We restrict our study to $\Sigma^+=\rho^{-1}((0,+\infty))$, the cone of positive multiples of $\alpha _g $. The cone  $\Sigma ^-=-\Sigma ^+ $ is obtained by changing the orientation of $D$. 
Given an open subset $U\subset M$, we denote by $\Sigma ^+_U$ the cone 
$\Sigma ^+_U= \{ (q,s\alpha_g(q) )\ \mid\ q\in U, s>0 \}$.

We consider the symplectic conic manifold $\Sigma ^+\times \R^2_{u,v}$ endowed with the symplectic
 form $\tilde\omega = \omega_{\vert\Sigma} + du \wedge dv$ and with the conic structure defined by 
 $\lambda \cdot (q,s\,\alpha_g(q),u,v)=(q,\lambda s\,\alpha_g(q), \sqrt{\lambda }u, \sqrt{\lambda }v)$ for any $\lambda>0$.
We define the function $I$ on $\Sigma ^+\times \R^2_{u,v}$ by $I(\sigma,u,v)=u^2 + v^2$, for any $\sigma\in\Sigma^+$ and $(u,v)\in\R^2$.
 is the Hessian of the function $(u,v)\mapsto g^\star(\sigma,u,v)$ (which vanishes as well as its differential at $(0,0)$) for any fixed
 $\sigma\in\Sigma^+$.

We are going to establish the following Melrose normal form.

\begin{theorem}\label{theo:Melrose-NF} 
Let $\Gamma _0 $ be a closed arc of a Reeb orbit in $M$, diffeomorphic to $[0,1]$.
There exist a neighborhood $U$ of $\Gamma _0 $ and a homogeneous symplectic diffeomorphism $\chi$ of a conic neighborhood $C$ of $\Sigma^+_U $ in $T^\star U$ to a conical neighborhood $C'$  of $\Sigma^+ _U \times \{0\}$ in $\Sigma^+ _U \times \R^2$, satisfying $\chi (\sigma)=(\sigma ,0) $ for every $\sigma\in\Sigma^+_U $, such that $g^\star\circ\chi^{-1} = \rho I$. 
\end{theorem}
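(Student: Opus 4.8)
The strategy is the classical "normal form along an isotropic submanifold" scheme, executed in two stages: first a linear (fiberwise-quadratic) normalization of $g^\star$ transverse to $\Sigma^+$, then a Moser-type deformation argument to kill the remaining higher-order terms.

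First I would set up good symplectic coordinates adapted to $\Sigma^+_U$. Since $\Gamma_0$ is an arc of a Reeb orbit, pick a coordinate $t$ along it; on the codimension-$2$ symplectic cone $\Sigma$ the Hamiltonian $\rho$ generates (up to sign) the Reeb flow, so $(t,\tau)$ with $\tau=s$ are Darboux-type coordinates on $\Sigma^+_U$ in which $\rho=\tau$ and $\omega_{\vert\Sigma}=dt\wedge d\tau$ (possibly after shrinking $U$ and using a Darboux theorem on the symplectic cone, being careful to respect homogeneity — $\tau$ has degree $1$, $t$ degree $0$). Then extend to a full conic Darboux chart $(t,\tau,x,\xi)$ on a conic neighborhood $C$ of $\Sigma^+_U$ in $T^\star U$, with $\Sigma^+_U=\{x=\xi=0\}$, $\omega=dt\wedge d\tau+dx\wedge d\xi$, and $x,\xi$ homogeneous of degree $1/2$ so that the conic structure matches the target $\Sigma^+_U\times\R^2_{u,v}$. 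In these coordinates $g^\star$ vanishes to second order on $\{x=\xi=0\}$, and by the remark preceding the theorem its fiberwise Hessian in the $(x,\xi)$ variables is the positive-definite form coming from the sub-Riemannian metric; a $\tau$-dependent linear symplectic change of the $(x,\xi)$ variables (a smooth path in $Sp(2,\R)$, which exists and can be chosen smooth because the Hessian is positive definite — diagonalize and take square roots) brings the quadratic part of $g^\star$ to exactly $\rho\,I=\tau(x^2+\xi^2)$. This is the step where one must check that the linear normalization can be done symplectically and homogeneously; it is essentially the observation that a positive-definite quadratic Hamiltonian on $\R^2$ is linearly symplectically equivalent to $x^2+\xi^2$, with smooth dependence on the parameter $\tau>0$.

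After this reduction we have $g^\star = \tau(x^2+\xi^2) + R$, where $R$ vanishes to third order along $\{x=\xi=0\}$ and is homogeneous of the correct degree. The second and main stage is a Moser homotopy/deformation argument: set $g_\theta^\star = \tau(x^2+\xi^2)+\theta R$ for $\theta\in[0,1]$ and look for a $\theta$-family of homogeneous symplectomorphisms $\psi_\theta$, fixing $\Sigma^+_U$ pointwise, with $\psi_0=\mathrm{id}$ and $g_\theta^\star\circ\psi_\theta = g_0^\star$. Differentiating in $\theta$ reduces this to solving, at each $\theta$, the cohomological equation $\{g_\theta^\star, h_\theta\} = R$ for a generating Hamiltonian $h_\theta$ vanishing to high order on $\Sigma^+_U$, and then integrating the (non-autonomous, locally complete since everything is higher-order near the isotropic set) vector field $\vec{h}_\theta$. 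Solving $\{g_\theta^\star,\cdot\}h = R$ is possible because $g_0^\star=\tau(x^2+\xi^2)=\rho I$ is, for fixed $\tau>0$, a harmonic-oscillator Hamiltonian whose flow in $(x,\xi)$ is periodic rotation: one inverts the operator $\{g_\theta^\star,\cdot\}$ on the space of functions vanishing to order $\geq 3$ on $\{x=\xi=0\}$ by averaging over this $S^1$-action (the non-resonant part is invertible term-by-term on the Taylor expansion, and one upgrades from formal to smooth by a Borel-summation plus a further small Moser correction, or by working directly with the division lemmas for the oscillator). The fact that $R$ vanishes to third order guarantees $h_\theta$ vanishes to third order as well, hence $\vec h_\theta$ vanishes on $\Sigma^+_U$ and $\psi_\theta(\sigma)=\sigma$ there, and homogeneity is preserved throughout because $R$, $g_\theta^\star$ and the averaging all respect the conic structure.

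The hardest part is this last step — the passage from the formal/term-by-term solution of the cohomological equation to an honest smooth normal form, i.e. controlling the Moser deformation uniformly near the isotropic cone $\Sigma^+_U$ and at the cone point, and making sure the resulting $\chi=\psi_1\circ(\text{linear reduction})$ is a genuine diffeomorphism of a full conic neighborhood $C$ onto a conic neighborhood $C'$ of $\Sigma^+_U\times\{0\}$, not merely a formal jet. I would handle this by the standard device of combining the oscillator division lemma (every smooth function vanishing to order $k$ on $\{x=\xi=0\}$ and having zero $S^1$-average can be written as $\{\rho I,\cdot\}$ applied to a smooth function vanishing to order $k$, with the $\tau$-dependence and homogeneity tracked) with Moser's trick, exactly as in Melrose's original argument but carried out in full; the $S^1$-averaging here is legitimate precisely because the Reeb orbit arc is short enough that $g^\star$'s leading part generates a genuine periodic flow in the transverse variables. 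Finally I would note that $g^\star\circ\chi^{-1}=\rho I$ together with $\chi(\sigma)=(\sigma,0)$ and homogeneity of degree $0$ of $\chi$ with respect to the matched conic structures is exactly the assertion, after possibly shrinking $U$ and $C$ once more.
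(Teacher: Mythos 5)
Your first stage (conic Darboux coordinates plus a fiberwise linear symplectic normalization of the transverse Hessian) and the term-by-term solution of cohomological equations correspond, in the paper, to the Birkhoff normal form of Theorem \ref{theo:BNF}, which is quoted from \cite{CHT-I} and already yields $g^\star\circ\chi^{-1}=\rho I+\mathrm{O}_\Sigma(\infty)$. Note that even at this formal level your sketch is incomplete: the averaging method only removes the non-resonant part, and the possible resonant terms $c_k(\sigma)I^k$ (which homogeneity does not forbid) require the specific, lengthy arguments of \cite{CHT-I} (normal form with remainder, Darboux--Weinstein, an infinite series of cohomological equations); this could at least be handled by citation.

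The genuine gap is in your last step, the removal of the flat remainder, which you dispatch as ``Borel summation plus a further small Moser correction''. This is precisely the hard point, and the averaging/division-lemma machinery does not reach it. First, a flat remainder $R=\mathrm{O}_\Sigma(\infty)$ need not have zero $S^1$-average in the transverse variables, so it is not in the range of the averaging construction; the oscillator division lemma treats finite-order vanishing and cannot produce the required smooth, flat solution $h$ of $\{\rho I,h\}=R$ when $R$ is flat but resonant. Second, your claim that the leading part ``generates a genuine periodic flow in the transverse variables'' is not correct: the Hamiltonian flow of $\rho I$ (equivalently of $H=\sqrt{\rho I}$) rotates in $(u,v)$ but simultaneously drifts along the Reeb direction at speed proportional to $J$, so there is no periodicity to average over; solving the transport equation forces you to integrate along orbits that cross the flow box in time of order $1/J$ and degenerate as $J\to 0$, and the whole difficulty is to show that the resulting conjugation is smooth \emph{up to} $\Sigma^+\times\{0\}$. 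The paper resolves exactly this by Nelson's scattering trick: it extends $G$ to $\tilde G$ with $\tilde G-H=\mathrm{O}_\Sigma(\infty)$, defines $\chi=\lim_{t\to+\infty}H_t\circ\tilde G_{-t}$ (the limit being attained in finite time because orbits exit the cone $C_1$, with the quantitative exit estimates of Lemma \ref{lemm:firstint}), and proves smoothness of the extension by the identity on $\Sigma^+\times\{0\}$ via estimates on $\{J,\tilde G\}$, on the differential of $H_t$ (of size $\mathrm{O}(J^{-2})$ for $Jt$ bounded), and by lifting the argument to jet spaces. This dynamical, non-recurrence-based argument is what your ``small Moser correction'' would have to replace, and as written it does not; without it the proof only reaches the statement of Theorem \ref{theo:BNF}, not Theorem \ref{theo:Melrose-NF}.
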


The proof in done in Sections \ref{sec:Birkhoff} and \ref{sec:Nelson} hereafter. But Section \ref{sec:Birkhoff} also contains
 results on automorphisms preserving the normal form, and on a parallel transport property along Reeb trajectories.

\subsection{Birkhoff normal form and parallel transport}\label{sec:Birkhoff}
Let us first recall the Birkhoff normal form derived in \cite{CHT-I}.
In what follows, given $k\in\N\cup\{+\infty\}$, given smooth maps $f_1$ and $f_2$ on $T^\star M$ (or on $\Sigma ^+\times \R^2_{u,v}$)
 with values in some manifold, the notation $f_1=f_2+\mathrm{O}_\Sigma(k)$ means that $f_1$ coincides with $f_2$ along $\Sigma$
 (or along $\Sigma\times\{0\}$) at order $k$, at least. 

\begin{theorem}[\cite{CHT-I}]\label{theo:BNF} 
Let $q_0\in M$ be arbitrary. 
There exist a conic neighborhood $C$ of $(q_0, \alpha_g(q_0))\in\Sigma^+$ in $T^\star M\setminus\{0\}$ and a smooth homogeneous symplectomorphism $\chi:C \rightarrow \chi(C)\subset \Sigma ^+\times \R^2_{u,v}$, satisfying $\chi (\sigma )=(\sigma, 0)$ for every $\sigma \in \Sigma^+ \cap C$, such that $g^\star\circ\chi^{-1} = \rho I  + \mathrm{O}_\Sigma(\infty)$.
\end{theorem}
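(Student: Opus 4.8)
\medskip
\noindent\textbf{Sketch of the argument.} The plan is to build $\chi$ as a composition of three homogeneous symplectomorphisms, each defined on a conic neighbourhood of $(q_0,\alpha_g(q_0))$ and fixing $\Sigma^+$ pointwise, that successively bring $g^\star$ closer to the model $\rho I$. First I would invoke a conic version of Weinstein's symplectic tubular neighbourhood theorem. Since $\Sigma$ is a symplectic subcone of codimension $2$, its symplectic normal bundle $N\Sigma=(T\Sigma)^{\omega}$ is a rank-$2$ symplectic vector bundle over $\Sigma^+$, symplectically trivial over a contractible conic neighbourhood of $(q_0,\alpha_g(q_0))$. The slightly delicate point is that the $\R^+$-scaling acts on the fibres of $N\Sigma$ by $\lambda\,\mathrm{Id}$ while the target $\Sigma^+\times\R^2$ carries the scaling $\sqrt{\lambda}\,\mathrm{Id}$ on $(u,v)$; one reconciles this by rescaling the trivialisation by $\rho^{-1/2}$, after which the conic Weinstein theorem yields a homogeneous symplectomorphism of a conic neighbourhood of $\Sigma^+$ in $T^\star M$ onto a conic neighbourhood of $\Sigma^+\times\{0\}$ in $(\Sigma^+\times\R^2,\tilde\omega)$, equal to $\sigma\mapsto(\sigma,0)$ on $\Sigma^+$. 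Pushing $g^\star$ forward, I may henceforth treat $g^\star$ as a non-negative function on $\Sigma^+\times\R^2$, homogeneous of degree $2$, vanishing exactly on $\Sigma^+\times\{0\}$.

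Next I would normalise the transverse Hessian. As $g^\star\geq 0$ vanishes together with its differential on $\Sigma^+\times\{0\}$, its second-order Taylor part in $(u,v)$ is a quadratic form $Q_\sigma$, positive definite by the step-$2$ (contact) hypothesis and homogeneous in $\sigma$. Williamson's normal form on $(\R^2,du\wedge dv)$ supplies a fibrewise-linear symplectic map sending $Q_\sigma$ to $\mu(\sigma)(u^2+v^2)$, where $\mu(\sigma)^2$ is the determinant of $Q_\sigma$ in a symplectic basis; a short computation in contact Darboux coordinates, which is precisely where the normalisation of $\alpha_g$ enters (the condition that ${d\alpha_g}_{\vert D}$ be the $g$-volume on $D$), identifies $\mu$ with $\rho$. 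After this conic, $\Sigma^+$-fixing change we have $g^\star=\rho I+\mathrm{O}_\Sigma(3)$.

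Finally I would run the Birkhoff iteration. Assume $g^\star=\rho I+R$ with $R=\mathrm{O}_\Sigma(N)$, $N\geq 3$, and let $R_N$ be the part of $R$ homogeneous of degree $N$ in $(u,v)$. Because $g^\star\circ\exp(\vec H)=\rho I+R+\{H,\rho I\}+(\text{higher order})$ for $H=\mathrm{O}_\Sigma(N)$, removing $R_N$ reduces to solving the cohomological equation $\{\rho I,H\}=R_N$ modulo $\ker(\mathrm{ad}_{\rho I})$. Here $\vec{\rho I}=I\,\vec\rho+\rho\,\vec I$: its flow rotates the $(u,v)$-fibre and translates along the Reeb arc, and conserves both $\rho$ and $I$; Fourier-decomposing $R_N$ in the rotation angle turns the equation into scalar transport equations along the Reeb field, which are solvable for all non-resonant modes. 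A resonant term can arise only for $N=2m$ even, where homogeneity together with invariance under $\mathrm{ad}_{\rho I}$ pins it down to the rigid form $\rho^{2-m}c(w)\,I^m$ with $c$ a function of the Reeb-transverse variables $w$ (for $m=1$ this is the term $\rho I$ already fixed at the previous stage). Such a term is removed by the auxiliary transformation generated by $H=\rho^{2-m}c(w)\,\tau(q)\,I^{m-1}$, where $\tau$ is a local time function for the Reeb flow near $q_0$ — this is the ``parallel transport along Reeb trajectories'' mentioned in the introduction, and it is exactly here that working near a single point $q_0$ (so that $\tau$ is well defined) matters. Iterating over $N$, composing the resulting time-one maps, and Borel-summing the sequence of homogeneous generating functions produces a homogeneous $\chi$ with $\chi(\sigma)=(\sigma,0)$ and $g^\star\circ\chi^{-1}=\rho I+\mathrm{O}_\Sigma(\infty)$.

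I expect the main obstacle to be this last stage: analysing $\mathrm{ad}_{\rho I}$ precisely, solving the transport equations with uniform control near $q_0$, and — above all — checking that every resonant term has the rigid shape above so that the Reeb-time transformation absorbs it without disturbing the lower-order normalisation, all while respecting the homogeneity bookkeeping. The conic matching in the first stage is the secondary difficulty. (Upgrading this $\mathrm{O}_\Sigma(\infty)$ statement to the exact normal form $g^\star\circ\chi^{-1}=\rho I$ of Theorem~\ref{theo:Melrose-NF} requires the additional argument of Section~\ref{sec:Nelson}.)
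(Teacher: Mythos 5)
Your sketch follows essentially the same route that the paper attributes to \cite{CHT-I} for this theorem (which is only cited here, not reproved): a Darboux--Weinstein/conic tubular-neighbourhood step, normalisation of the transverse quadratic part to $\rho I$, and then an infinite sequence of cohomological (transport-along-Reeb plus fibre-rotation) equations summed \`a la Borel to flatten the remainder to $\mathrm{O}_\Sigma(\infty)$. So the proposal is consistent with the paper's approach; only minor imprecisions (e.g.\ the resonant coefficient need not be Reeb-invariant, one simply solves $\{\rho,b\}=a$ by integrating along the Reeb flow, which is where locality near $q_0$ is used) would need tightening in a full write-up.
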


In other words, in local coordinates $(q,s,u,v)$ as above, we have $g^\star\circ\chi^{-1}(q,s,u,v) = s(u^2+v^2)  + \mathrm{O}_\Sigma(\infty)$.
Note that the proof of Theorem \ref{theo:BNF}, done in \cite{CHT-I}, is quite long and technical, and consists in deriving a normal form
 with remainder terms, in using the Darboux-Weinstein lemma and then in improving the remainder terms to a flat remainder 
term $\mathrm{O}_\Sigma(\infty)$ by solving an infinite series of cohomological equations. The final step of the proof of
 Theorem \ref{theo:Melrose-NF} will in particular consist of removing the flat term $\mathrm{O}_\Sigma(\infty)$
 (see Section \ref{sec:Nelson}).

Now, we give a result on symplectomorphisms (canonical transforms) preserving the above Birkhoff normal form.
 We identify $\R^2_{u,v}\sim\C$ with $(u,v)\sim u+iv$ for convenience.

\begin{theorem}\label{theo:LS}
With the notations of Theorem \ref{theo:BNF}, let $\Theta:\chi(C) \rightarrow \chi(C)$ be a smooth homogeneous symplectomorphism
 (for the symplectic form $\tilde\omega$), satisfying $\Theta(\sigma,0)=(\sigma,0)$ for every $\sigma\in\Sigma^+$, such
 that $g^\star\circ\chi^{-1}\circ\Theta^{-1} = \rho I  + \mathrm{O}_\Sigma(\infty)$, that is, such that $\rho I\circ\Theta = \rho I + \mathrm{O}_\Sigma(\infty)$.
Then there exist smooth mappings $F:C\rightarrow\Sigma^+$ satisfying $F(\sigma,0)=\sigma$ for every $\sigma\in\Sigma^+$,
 and $\theta_j:\Sigma^+\cap C\rightarrow\R$ such that $\{\rho,  \theta _j \}_\omega=0$ for every $j\in\N$, i.e., all
 functions $ \theta_j $ are constant along the Reeb orbits, and such that
$$
\Theta (\sigma , u+iv)= \left( F(\sigma ,u+iv), e^{i \sum _{j=0}^{+\infty} \theta _j (\sigma )I(u,v)^j }(u+iv) \right) + \mathrm{O}_\Sigma (\infty ) ,
$$
for all $(\sigma,u,v)\in C$. 
In particular, not only $\rho I$ but also $I$ and $\rho$ are preserved by $\Theta$ modulo $\mathrm{O}_\Sigma (\infty )$.
\end{theorem}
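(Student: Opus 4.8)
Since every assertion is to be read modulo $\mathrm{O}_\Sigma(\infty)$, the plan is to work with the $\infty$-jet of $\Theta$ along $\Sigma^+\times\{0\}$ in the transverse variable $w=u+iv$ --- a formal symplectomorphism for $\tilde\omega$ with coefficients smooth in $\sigma\in\Sigma^+$ --- and to return to a genuine smooth representative by Borel summation at the very end. I would first identify the linear part. As $\Sigma$ is a symplectic submanifold of $(\Sigma^+\times\R^2,\tilde\omega)$ whose symplectic orthogonal is exactly $\R^2_{u,v}$, and as $\Theta$ fixes $\Sigma^+\times\{0\}$ pointwise --- so $d\Theta$ is the identity on $T\Sigma$ --- the differential $d\Theta_{(\sigma,0)}$ preserves that orthogonal and restricts on $\R^2$ to an element $L(\sigma)\in SL(2,\R)$. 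Because $\rho I$ is exactly quadratic in $w$ and $\rho I\circ\Theta=\rho I+\mathrm{O}_\Sigma(\infty)$, the Hessians in $w$ at $(\sigma,0)$ agree; that Hessian being $\rho(\sigma)>0$ times the identity, $L(\sigma)$ is an isometry, so $L(\sigma)\in SO(2)$ is the rotation by some angle $\theta_0(\sigma)$.

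Next I would set $X_2:=\tfrac12\theta_0(\sigma)\,I$. The time-one map $\exp(\vec{X_2})$ fixes $\Sigma^+\times\{0\}$ pointwise and has the same linearization along it as $\Theta$, so $\exp(-\vec{X_2})\circ\Theta$ has trivial linearization there and hence (a standard fact about formal symplectomorphisms tangent to the identity) equals $\exp(\vec Y)$ for a formal Hamiltonian $Y=\sum_{k\geq3}Y_k$, each $Y_k$ homogeneous of degree $k$ in $w$ with smooth $\sigma$-dependent coefficients. Since $X_2$ is weight-zero in $w$ and the $\R^2$-bracket with $I$ annihilates weight-zero terms, iterating $\{\cdot,X_2\}$ gives $\rho I\circ\exp(\vec{X_2})=\rho I+P$ with $P$ weight-zero of $w$-degree $\geq4$, every term of $P$ carrying the factor $\{\rho,\theta_0\}_\omega$; the hypothesis thus reads $(\rho I+P)\circ\exp(\vec Y)=\rho I+\mathrm{O}_\Sigma(\infty)$.

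I would then read this degree by degree in $w$. The Poisson bracket of $\tilde\omega$ is $\{\cdot,\cdot\}_\omega+\{\cdot,\cdot\}_{\R^2}$, the first summand preserving the $w$-degree and the second lowering it by $2$, and $\{I,\cdot\}_{\R^2}$ is diagonal on the monomials $w^k\bar w^l$ with eigenvalue proportional to the weight $\nu:=k-l$, its kernel being exactly the powers of $I$. Because each $Y_k$ has degree $k\geq3$, the operator $\{\cdot,Y\}$ strictly raises the $w$-degree; using this, together with $\{\rho I,f(\sigma)I^p\}=I^{p+1}\{\rho,f\}_\omega$ (weight-zero), one finds that the $w$-degree-$m$ part of $(\rho I+P)\circ\exp(\vec Y)-\rho I$ equals $\rho\{I,Y_m\}_{\R^2}+I^{m/2}\{\rho,c\}_\omega$ for $m$ even --- with $c=\theta_0$ when $m=4$ and $c$ the coefficient of the previously determined $Y_{m-2}$ when $m\geq6$ --- and simply $\rho\{I,Y_m\}_{\R^2}$ for $m$ odd, every other would-be contribution vanishing because it involves either an $\R^2$-bracket of powers of $I$ or an already-established Reeb-invariance. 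Requiring this polynomial in $w$ to be $\mathrm{O}_\Sigma(\infty)$, hence zero, and splitting by weight: the $\nu\neq0$ part is $\rho\cdot 2i\nu\,(Y_m)_\nu=0$, forcing $(Y_m)_\nu=0$, so $Y_m$ is a multiple of $I^{m/2}$ when $m$ is even and vanishes when $m$ is odd; the $\nu=0$ part is $I^{m/2}\{\rho,c\}_\omega=0$, forcing $\{\rho,c\}_\omega=0$. Inductively this yields $Y=\sum_{j\geq2}d_j(\sigma)\,I^j$ with $\{\rho,d_j\}_\omega=0$ for all $j$, and $\{\rho,\theta_0\}_\omega=0$; Reeb-invariance is carried from one step to the next by the Jacobi identity, brackets and products of Reeb-invariant functions being Reeb-invariant.

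It remains to integrate. Both $\vec{X_2}$ and $\vec Y$ are Hamiltonian vector fields of functions of $(\sigma,I)$ alone, so their flows preserve $I=|w|^2$, vanish in the $\sigma$-direction where $I=0$, and rotate $w$ by a phase that is a function of $(\sigma,I)$; composing $\exp(\vec{X_2})\circ\exp(\vec Y)=\Theta$ and expanding that phase in powers of $I$ yields $\Theta(\sigma,u+iv)=\bigl(F(\sigma,u+iv),\,e^{\,i\sum_{j\geq0}\theta_j(\sigma)I(u,v)^j}(u+iv)\bigr)+\mathrm{O}_\Sigma(\infty)$ with $F(\sigma,0)=\sigma$, the higher $\theta_j$ being polynomials in $\theta_0$, the $d_k$ and their iterated $\{\cdot,\cdot\}_\omega$-brackets and thus again Reeb-invariant, and a Borel summation turning the formal phase into a smooth function of $(\sigma,I)$. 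Finally, the transverse part of $\Theta$ being a rotation, $I\circ\Theta=I$ modulo $\mathrm{O}_\Sigma(\infty)$, so $(\rho\circ\Theta-\rho)\,I=\rho I\circ\Theta-\rho I=\mathrm{O}_\Sigma(\infty)$, and since multiplication by $I$ is injective on Taylor series in $w$, $\rho\circ\Theta=\rho$ modulo $\mathrm{O}_\Sigma(\infty)$ as well. The main obstacle is the bookkeeping of the degree-by-degree recursion in the third paragraph: verifying that $\{\cdot,Y\}$ genuinely raises the $w$-degree so that each equation is linear in the new unknown $Y_m$, that all lower-order contributions collapse to the single term $I^{m/2}\{\rho,c\}_\omega$, so the non-resonant ($\nu\neq0$) part is the clean relation $\rho\,2i\nu\,(Y_m)_\nu=0$ while the resonant ($\nu=0$) part imposes exactly one new Reeb-invariance, and checking that Reeb-invariance survives through these equations and through the formulas for the $\theta_j$.
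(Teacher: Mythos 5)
Your proposal is correct and follows essentially the same route as the paper: after identifying the linearization along $\Sigma^+\times\{0\}$ as a rotation by $\theta_0(\sigma)$ and factoring out the flow of $\theta_0 I/2$, you run the same degree-by-degree induction in the transverse variables, where the nonresonant part of each equation kills the non-$I$ components of the generator (your weight decomposition of $\{I,\cdot\}$ is exactly the paper's appendix lemma on the operator $A=u\partial_v-v\partial_u$) and the resonant part forces the Reeb-invariance $\{\rho,\cdot\}_\omega=0$ of the coefficients. The differences are only organizational --- you encode the tangent-to-identity part as a single formal exponential $\exp(\vec Y)$ instead of the successive exponentials $\exp(\vec{S_2})\circ\exp(\vec{S_4})\circ\cdots$ produced via the paper's Lemma \ref{lemm:cano}, and you spell out the final integration, Borel summation, and the deduction that $I$ and $\rho$ are separately preserved --- none of which changes the substance.
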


Theorem \ref{theo:LS} actually follows from the Lewis-Sternberg theorem (see \cite{Le-39,St-50}, see also \cite{IS-02}),
 or more precisely from a variant of it where we assume that $\chi$ is the identity along $\Sigma^+$. But we provide hereafter a
 more direct proof.

\begin{proof}[Proof of Theorem \ref{theo:LS}.]
We start with the following lemma. Recall that $(\Sigma^+,\omega_{\vert\Sigma})$ is a symplectic conic manifold of dimension $4$.

\begin{lemma}\label{lemm:cano}
Let $N\geq 2$ be an arbitrary integer.
Let $\Theta_N:\chi(C)\rightarrow\chi(C)$ be a smooth symplectomorphism (for the symplectic form $\tilde\omega$), satisfying
 $\Theta_N = \mathrm{id} + R_N + \mathrm{O}_\Sigma (N+1)$, with $R_N$ homogeneous of degree $N\geq 2$ in $(u,v)$.
 Then there exists a smooth function $S: \chi(C) \rightarrow \R$, homogeneous of degree $N+1$ in $(u,v)$, such
 that $\Theta_N = \exp(\vec{S}) + \mathrm{O}_\Sigma (N+1)$.
\end{lemma}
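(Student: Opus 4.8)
The plan is to exhibit the generating function $S$ by solving, at top order in $(u,v)$, the equation that expresses $\Theta_N$ as a time-one Hamiltonian flow. First I would recall that a symplectomorphism $\Theta_N$ which is $\mathrm{id}+R_N+\mathrm{O}_\Sigma(N+1)$ has a remainder $R_N$ that is not arbitrary: the symplectic condition $\Theta_N^\ast\tilde\omega=\tilde\omega$, expanded to the relevant order, forces the linear vector field $R_N$ (viewed as a map $\chi(C)\to T\chi(C)$, fiberwise of degree $N$ in $(u,v)$ and tangent to $\Sigma\times\{0\}$) to be \emph{symplectic to leading order}, i.e. the $1$-form $\iota_{R_N}\tilde\omega$ is closed modulo $\mathrm{O}_\Sigma(N)$. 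Since we work on a conic neighborhood of $\Sigma^+\times\{0\}$, which retracts onto $\Sigma^+$, this closed $1$-form is exact modulo $\mathrm{O}_\Sigma(N+1)$: there is a smooth function $S$, homogeneous of degree $N+1$ in $(u,v)$ (the primitive vanishes to order $N+1$ on $\Sigma$ because $\iota_{R_N}\tilde\omega$ vanishes to order $N$ and contains a $du,dv$ component), with $\iota_{R_N}\tilde\omega = dS + \mathrm{O}_\Sigma(N+1)$, equivalently $R_N=\vec S+\mathrm{O}_\Sigma(N+1)$.

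Next I would compare $\Theta_N$ with $\exp(\vec S)$. By Taylor expansion of the flow, $\exp(\vec S)=\mathrm{id}+\vec S+\mathrm{O}_\Sigma(N+1)$: indeed $\vec S$ is homogeneous of degree $N-1\geq 1$ in $(u,v)$ and tangent to $\Sigma\times\{0\}$, so its second iterate $\vec S(\vec S)$ raises the vanishing order in $(u,v)$ by at least $N-1\geq 1$ further units, hence is $\mathrm{O}_\Sigma(N+1)$; the same holds for all higher terms in the exponential series. Therefore
$$
\Theta_N = \mathrm{id}+R_N+\mathrm{O}_\Sigma(N+1) = \mathrm{id}+\vec S+\mathrm{O}_\Sigma(N+1) = \exp(\vec S) + \mathrm{O}_\Sigma(N+1),
$$
which is the claim. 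One should also check that $S$ can be chosen so that $\exp(t\vec S)$ is defined on (a possibly shrunk) $\chi(C)$ for $t\in[0,1]$ and remains homogeneous of degree $0$ as a transformation; this is automatic since $\vec S$ is a homogeneous vector field vanishing on the cone $\Sigma^+\times\{0\}$, so the flow preserves that cone and exists for a uniform time on a conic neighborhood.

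The main obstacle is the first step: extracting from the symplectic condition the precise statement that $\iota_{R_N}\tilde\omega$ is closed to the right order, and then arguing exactness with control of the vanishing order on $\Sigma$. The bookkeeping of the $\mathrm{O}_\Sigma(\cdot)$ orders under composition and exterior differentiation is where care is needed: one must track separately the order of vanishing in the $(u,v)$ directions and the tangency to $\Sigma$, use that $\tilde\omega=\omega_{\vert\Sigma}+du\wedge dv$ has the $du\wedge dv$ block nondegenerate in the normal directions, and invoke the Poincar\'e lemma on the conic (hence contractible onto $\Sigma^+$) neighborhood in the parametrized form, with homogeneity of $S$ in $(u,v)$ read off from the homogeneity of $R_N$. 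Everything else is a routine Taylor expansion of the Hamiltonian flow.
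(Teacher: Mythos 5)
Your proposal is correct and follows essentially the same route as the paper's proof: symplecticity of $\Theta_N$ makes the $1$-form $\iota_{R_N}\tilde\omega$ (in the paper, $A_N\,dp-B_N\,dq$) closed to the relevant order, a homogeneous primitive $S$ of degree $N+1$ in the normal variables is then taken, and the Taylor expansion of the flow gives $\exp(\vec S)=\mathrm{id}+\vec S+\mathrm{O}_\Sigma(2N-1)$, which suffices since $2N-1\geq N+1$. Only a minor slip: $\vec S$ is homogeneous of degree $N$ (not $N-1$) in $(u,v)$ in its $(u,v)$-components, but this does not affect the order bookkeeping or the conclusion.
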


\begin{proof}[Proof of Lemma \ref{lemm:cano}.]
Let us choose local symplectic coordinates $(q,p)$ with $q=(q_1,q_2)$, $p=(p_1,p_2)$  so that
$\Sigma =\{ q_2=p_2=0 \} $.
We write
$\Theta _N (q,p)=(q_1, q_2+A_N (q,p), p_1+B_N (q,p ) ) + O_\Sigma (N+1)$.
Using the fact that $\Theta _N$ is symplectic, we get
$dA_N \wedge dp - dB_N \wedge dq =O_\Sigma (N)$.
Hence, there exists $S$ homogeneous of degree $N+1$ in the variable $(q_2,p_2)$ so that
$dS = A_N dp -B_N dq + O_\Sigma (N+1) $. This gives the result, because
${\rm exp}(\vec{S})(x)= x+ \vec{S}(x)+ O_\Sigma (2N) $.

\end{proof}

Hereafter, we use local coordinates $(\sigma,u,v)$, with $\sigma=(q,s)$. In order to avoid heavy notations, we denote
 (without any index) by $\{\ ,\ \}$ the Poisson bracket with respect to the symplectic form
 $\tilde\omega = \omega_{\vert\Sigma} + du\wedge dv$. Note that the variables $\sigma$ and $(u,v)$ are symplectically orthogonal,
 and that the coordinates $u$ and $v$ are symplectically conjugate and $\{u,v\}=1$.

In these local coordinates $(q,s,u,v)$, we have $g^\star\circ\chi^{-1}(q,s,u,v)=s(u^2+v^2)+\mathrm{O}_\Sigma(\infty)$. Setting
 $\Theta(q,s,u,v)=(q',s',u',v')$, we have also
 $g^\star\circ\chi^{-1}\circ\Theta^{-1}(q',s',u',v')=s'({u'}^2+{v'}^2)+\mathrm{O}_\Sigma(\infty)$.
 It follows that $s'({u'}^2+{v'}^2) = s(u^2+v^2) +\mathrm{O}_\Sigma(\infty)$, which can be written in short as $\rho I\circ\Theta=\rho I+\mathrm{O}_\Sigma(\infty)$.

Besides, since $\Theta(\sigma,0)=(\sigma,0)$ and the Hamiltonian vector field of $\rho $
is preserved,  we infer that $\sigma'=\sigma+\mathrm{O}_\Sigma(2)$.
Therefore we get that ${u'}^2+{v'}^2 = u^2+v^2 +\mathrm{O}_\Sigma(4)$. As a consequence, there exists $\theta_0(\sigma)\in\R$
 such that $u'+iv'=e^{i\theta_0(\sigma)}(u+iv) +\mathrm{O}_\Sigma(3)$, where $\sigma=(q,s)$. Defining the smooth $2$-homogeneous
 function $S_2 = \theta _0 I /2$, i.e., $S_2(\sigma,u,v) = \theta_0(\sigma) (u^2+v^2)/2$, the corresponding Hamiltonian vector
 field $\vec S_2$ generates a rotation of angle $\theta_0(\sigma)$ in the coordinates $(u,v)$, and we infer that
$$
\Theta = \exp(\vec S_2) + \mathrm{O}_\Sigma(3).
$$
Now, writing $\exp(-\vec S_2)\circ\Theta = \mathrm{id} + R_2 + \mathrm{O}_\Sigma(4)$ with $R_2$ homogeneous of degree $3$ in $(u,v)$,
 applying Lemma \ref{lemm:cano} with $N=3$ yields the existence of a smooth $4$-homogeneous function $S_3$ such that 
\begin{equation}\label{eqTheta1}
\Theta = \exp(\vec{S_2})\circ \exp(\vec{S_3}) + \mathrm{O}_\Sigma(4) .
\end{equation}
Since $S_2 = \theta _0 I /2$, we have $\{S_2,\rho I\} = \frac{1}{2} \{\theta_0,\rho\}I^2$ and $\{S_2,\{S_2,\rho I\}\} = \frac{1}{4}
 \{\{\theta_0,\{\theta_0,\rho\}\}I^3=\mathrm{O}_\Sigma(6)$, and thus
$$
\rho I\circ \exp(\vec{S_2}) = \exp(\{S_2,\cdot\}).\rho I = \rho I + \frac{1}{2} \{\theta_0,\rho\}I^2 + \mathrm{O}_\Sigma(6) .
$$
Now, $S_3(\sigma,u,v)$ is a sum of terms of the kind $a_3(\sigma)Q_3(u,v)$ with $a_3$ smooth and $Q_3$ homogeneous in $(u,v)$ of
 degree $3$ (running over $u^3$, $u^2v$, $uv^2$, $v^3$). Taking one of them, $S_3(\sigma,u,v)=a_3(\sigma)Q_3(u,v)$, we compute
\begin{multline}\label{eeqTheta1}
\rho I\circ \exp(\vec{S_2})\circ \exp(\vec{S_3}) = \exp(\{S_3,\cdot\}).\left( \rho I + \frac{1}{2} \{\theta_0,\rho\}I^2 +
 \mathrm{O}_\Sigma(6) \right)  \\
= \rho I + \frac{1}{2} \{\theta_0,\rho\}I^2  + a_3\rho\{ Q_3, I\} + \mathrm{O}_\Sigma(5) .
\end{multline}
Computing $\{u^3,I\}=6u^2v$, $\{u^2v,I\}=-2u^3+4uv^2$, $\{uv^2,I\}=-4u^3v+2v^3$, $\{v^3,I\}=-6uv^2$, we note that $\{ Q_3, I\}$ is
 homogeneous of degree $3$.
Now, since we must have $\rho I\circ\Theta=\rho I+\mathrm{O}_\Sigma(\infty)$, it follows from \eqref{eqTheta1} that
 $\rho I = \rho I\circ\exp(\vec{S_2})\circ \exp(\vec{S_3}) + \mathrm{O}_\Sigma(5)$, and using \eqref{eeqTheta1} we infer that
$\frac{1}{2} \{\theta_0,\rho\}I^2  + a_3\rho\{ Q_3, I\} = \mathrm{O}_\Sigma(5)$. 
Since $I^2$ is $4$-homogeneous and $\{ Q_3, I\}$ is $3$-homogeneous, it follows that $\{\theta_0,\rho\}=0$ and $a_3\{ Q_3, I\}=0$.
Then we have either $a_3=0$ and thus $S_3=0$, or $\{ Q_3, I\}=0$, and in this case, it follows from Remark \ref{rem:linearalgebra}
 (in Appendix \ref{app:linearalgebra}) that $Q_3=0$. Then, in all cases, we have $S_3=0$.

Since $S_3=0$, writing now $\exp(-\vec S_2)\circ\Theta = \mathrm{id} + \mathrm{O}_\Sigma(3) = \mathrm{id} + R_3 + \mathrm{O}_\Sigma(4)$ 
with $R_3$ homogeneous of degree $3$, applying again Lemma \ref{lemm:cano} with $N=3$ yields the existence of a smooth $4$-homogeneous 
function $S_4$ such that
$$
\Theta = \exp(\vec{S_2})\circ \exp(\vec{S_4}) + \mathrm{O}_\Sigma (4).
$$
As above, setting $Q_4(\sigma,u,v)=a_4(\sigma)Q_4(u,v)$, we compute $\rho I\circ \exp(\vec{S_2})\circ \exp(\vec{S_4}) = \rho I + a_4\rho \{ Q_4,I \} + \mathrm{O}_\Sigma(6)$, and similarly, since we must have $\rho I\circ\Theta=\rho I+\mathrm{O}_\Sigma(\infty)$, it follows that $a_4\rho \{ Q_4,I \}=0$. Then, either $a_4=0$ or $\{ Q_4,I \}=0$. In the latter case, by Remark \ref{rem:linearalgebra} we have $AQ_4=0$, and by Lemma \ref{rem:linearalgebra} we get that $Q_4\in P_4^{\textrm{inv}}$, that is, $Q_4=c I^2$ for some $c\in\R$.

\medskip

The reasoning continues by iteration, and is even simpler now that we know that $\{\theta_0,\rho\}=0$ and thus that $\rho I\circ\exp(\vec S_2)=\rho I+\mathrm{O}(\infty)$. 
In passing, we note that this implies immediately that $I\circ\exp(\vec S_2)=I+\mathrm{O}(\infty)$ and $\rho\circ\exp(\vec S_2)=\rho+\mathrm{O}(\infty)$
We only describe the next step, then the recurrence is immediate.

Writing $\exp(-\vec S_4)\circ\exp(-\vec S_2)\circ\Theta = \mathrm{id} + R_4 + \mathrm{O}_\Sigma(5)$ with $R_4$ that is $4$-homogeneous, applying Lemma \ref{lemm:cano} with $N=4$ yields the existence of a smooth $5$-homogeneous function $S_5$ such that 
$$
\Theta = \exp(\vec{S_2})\circ \exp(\vec{S_4})\circ \exp(\vec{S_5}) + \mathrm{O}_\Sigma (5).
$$
Taking $S_4(\sigma,u,v)=a_4(\sigma)Q_4(u,v)$, we compute $\rho I\circ \exp(\vec{S_2})\circ \exp(\vec{S_4})\circ \exp(\vec{S_5}) = \rho I + \{ a_4,\rho\} I^3 + a_5\rho \{ Q_5,I \} + \mathrm{O}_\Sigma (7)$. Therefore, reasoning as above, we infer that $\{ a_4,\rho\}=0$ and $a_5\rho \{ Q_5,I \}=0$. Then either $a_5=0$ or $\{ Q_5,I \}=0$. In the latter case, we get $Q_5=0$ by Remark \ref{rem:linearalgebra}. Hence $S_5=0$.

Since $S_5=0$, writing now $\exp(-\vec S_4)\circ\exp(-\vec S_2)\circ\Theta = \mathrm{id} + \mathrm{O}_\Sigma(5) = \mathrm{id} + R_5 + \mathrm{O}_\Sigma(6)$ with $R_5$ homogeneous of degree $5$, applying again Lemma \ref{lemm:cano} with $N=5$ yields the existence of a smooth $6$-homogeneous function $S_6$ such that
$\Theta = \exp(\vec{S_2})\circ \exp(\vec{S_4})\circ \exp(\vec{S_6}) + \mathrm{O}_\Sigma (6)$.
Reasoning as for $S_4$, we establish that $S_6(\sigma,u,v) = a_6(\sigma) I(u,v)^3$.

A recurrence argument concludes the proof.
\end{proof}

\paragraph{Parallel transport along Reeb trajectories.}\label{par:par}
An interesting consequence of Theorem \ref{theo:LS} is that this result reveals a property of parallel transport of $D$ along Reeb trajectories: this transport is the image by the differential 
of $\Pi \circ \Theta^{-1} $ of the trivial transport in the $(u,v)$ planes.
Indeed, Theorem \ref{theo:LS} implies that $d\Theta(\sigma, 0 )$  is a rotation in the $(u,v)$ plane with constant angle $2s_1(\sigma )$ along the orbits of $\vec{\rho}$.
Projecting this property onto $M$ gives the invariance of the parallel transport under the change of charts preserving the Birkhoff normal form.

\subsection{End of the proof of Theorem \ref{theo:Melrose-NF} by Nelson's trick}\label{sec:Nelson}
To end the proof and in particular to remove the flat remainder term $\mathrm{O}_\Sigma(\infty)$ in the Birkhoff normal form given in Theorem \ref{theo:BNF}, we use the nice \emph{scattering} method due to Edward Nelson (see \cite{Nelson}), thanks to which we will establish an exact normal form in a Reeb flow box. 

The Birkhoff normal form given in Theorem \ref{theo:BNF} is valid in some conic neighborhood of $\Sigma ^+_U$, where $U $ is a flow box for the Reeb flow, i.e., $U$ is diffeomorphic to $S_x \times (-2c , T_0 +2c )_y$ for some $c>0$, with $Z=\partial _y$.
Theorem \ref{theo:BNF} provides us with a homogenous symplectic diffeomorphism that we use as a coordinate system, so that we now work in $\Sigma^+_U\times \R^2_{u,v}$. 

We define the Hamiltonian $H=\sqrt{\rho I }$ on $\Sigma ^+ \times (\R^2  \setminus \{0\}).$ We set $J=\sqrt{I/\rho} $ and we define $\theta$ so that $(J,\theta)$ are polar coordinates in $\R^2_{u,v}$. The function $J$ is homogeneous of degree $0$ and is a measure of the angular distance to $\Sigma$. 
The sphere bundle $S(\Sigma ^+_U \times (\R^2  \setminus \{0\}))$ is thus parametrized by $(m,J,\theta )=[ m,\alpha_g(m);J,\theta )]$ with $m\in U$. 

We denote by $H_t$ the flow of the Hamiltonian vector field
$$
\vec{H}=\frac{J}{2}\vec{\rho} + \frac{1}{J} \frac{\partial }{\partial \theta } ,
$$
which is homogeneous of degree $0$ and hence is defined on the sphere bundle. It is explicitly given by 
$$
H_t (m,J,\theta)= ( \mathcal{R}_{Jt/2}(m), J , \theta +  t/J ),
$$
where   $(\mathcal{R}_s)_{s\in\R}$ is  the Reeb flow on $M$.

Let $V_0 \subset V$ be defined  by $V_0=S_0 \times (-c , T_0 +c )$ with $\bar{S_0}\subset S $.
Let us choose $0< a_0< a_1$ and  denote by $C_j$ the cones  $C_{j}= \{ (m,J, \theta )\ \mid\ m\in U_j, J < a_j  \} $.
Using the Birkhoff normal form, recalling that $G=\sqrt{g^\star}$, we have $G^2= H^2+ \mathrm{O}_\Sigma(\infty)$, where the remainder term $\mathrm{O}_\Sigma(\infty)$ is smooth. This can be rewritten as
$$
G^2 = H^2(1+\mathrm{O}_\Sigma(\infty)) ,
$$
so that the geodesic Hamiltonian 
satisfies $G=H+ \mathrm{O}_\Sigma(\infty )$ in $C_{1}$ and the remainder term $\mathrm{O}_\Sigma (\infty )$ in the latter equation is smooth even though $G$ and $H$ are not.

We now extend $G$  to $\Sigma ^+ \times (\R^2\setminus \{ 0\})$
as follows: let $\psi $ be a smooth function that is homogeneous of degree $0$ on $\Sigma ^+ \times \R^2$ identically equal to $1$ in $C_{0}$ and identically equal to $0$ outside of $C_{1}$.
We define $\tilde{G}=\psi G +(1-\psi)H$, and we check that $R=\tilde{G}-H =\mathrm{O}_\Sigma (\infty )$ and that $\{ J, \tilde{G} \} =\dot{\mathrm{O}}_\Sigma (\infty )$, where the upper dot indicates that we are dealing here with functions on $\Sigma^+\times (\R^2\setminus \{0\})$. 
It follows from the definition that the flows 
$\tilde{G}_t $ and $H_t$ are complete and coincide outside of $C_1$. 

\begin{lemma}\label{lemm:firstint} 
Given any $z_0$, we set $\Jcal(t)=J(\tilde{G}_t (z_0))$. For every $N\geq 2$, there exist  $C_N>0$ and $D_N >0$ such that, if $0< \Jcal(0) \leq  \frac{1}{2}$ and $0\leq t \leq C_N/\Jcal(0) ^{2N} $, then $\Jcal(t)\leq 2\Jcal(0)$ and $|\Jcal(t)-\Jcal(0)|\leq D_N  \Jcal(0) ^{2N+1}  \, t$.
\end{lemma}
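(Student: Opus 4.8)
The plan is to set up a Grönwall-type argument for $\Jcal(t)=J(\tilde G_t(z_0))$, using that $J$ is \emph{almost} a conserved quantity for the flow $\tilde G_t$. First I would compute $\dot{\Jcal}(t)=\{J,\tilde G\}(\tilde G_t(z_0))$, and recall from the construction of $\tilde G$ that $\{J,\tilde G\}=\dot{\mathrm{O}}_\Sigma(\infty)$, i.e.\ it vanishes to infinite order along $\Sigma=\{J=0\}$ (in the $\tilde\omega$ sense, on $\Sigma^+\times(\R^2\setminus\{0\})$). Since everything takes place in the compact base (a Reeb flow box) and the relevant functions are homogeneous of degree $0$, flatness along $\Sigma$ translates, for each $N$, into a uniform bound of the form $|\{J,\tilde G\}(z)|\leq D_N\, J(z)^{2N+1}$ on the (compact) unit sphere bundle over the flow box, hence on the whole cone by homogeneity. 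This is the key quantitative input; its proof is just Taylor's theorem with the compactness of the base, so it is routine.

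Next I would run the differential inequality. On any interval where $\Jcal(\tau)\leq 2\Jcal(0)$ we get $|\dot{\Jcal}(\tau)|\leq D_N\,\Jcal(\tau)^{2N+1}\leq D_N\,(2\Jcal(0))^{2N+1}$, so integrating from $0$ to $t$ gives $|\Jcal(t)-\Jcal(0)|\leq D_N\,2^{2N+1}\Jcal(0)^{2N+1}\,t$. A standard continuity/bootstrap argument then shows this persists: as long as the right-hand side stays below $\Jcal(0)$ we have $\Jcal(t)\leq 2\Jcal(0)$, and this holds precisely for $t\leq C_N/\Jcal(0)^{2N}$ with $C_N=(D_N\,2^{2N+1})^{-1}$; let $t^\star$ be the first time $\Jcal$ would hit $2\Jcal(0)$, and the bound just derived on $[0,t^\star]$ contradicts $\Jcal(t^\star)=2\Jcal(0)$ once $t^\star< C_N/\Jcal(0)^{2N}$, so in fact the bound is valid on all of $[0,C_N/\Jcal(0)^{2N}]$. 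Relabelling the constants ($D_N\rightsquigarrow D_N 2^{2N+1}$) gives exactly the stated inequalities $\Jcal(t)\leq 2\Jcal(0)$ and $|\Jcal(t)-\Jcal(0)|\leq D_N\Jcal(0)^{2N+1}t$ for $0<\Jcal(0)\leq\frac12$ and $0\leq t\leq C_N/\Jcal(0)^{2N}$.

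One point that needs a little care: for the bound on $\{J,\tilde G\}$ to be genuinely uniform one must work on the unit sphere bundle over the \emph{closed} flow box (or a slightly larger one with $\overline{S_0}\subset S$), where $\tilde G$ and $H$ are smooth — note that $\{J,\tilde G\}$ is smooth even though $G$, $H$, $\tilde G$ individually are not smooth across $\Sigma$, because away from $\Sigma$ they are smooth and along $\Sigma$ the bracket is the controlled flat remainder $R=\tilde G-H$ fed through $\{J,\cdot\}$, using $\{J,H\}=0$ (which holds since $H_t$ preserves $J$, as the explicit formula $H_t(m,J,\theta)=(\mathcal R_{Jt/2}(m),J,\theta+t/J)$ shows). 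So I would first record $\dot{\Jcal}=\{J,\tilde G-H\}=\{J,R\}$ with $R=\mathrm O_\Sigma(\infty)$, and only then invoke compactness. The main (though mild) obstacle is this bookkeeping about where the estimates are uniform and the bootstrap on the interval of validity; the analysis itself is an elementary Grönwall estimate.
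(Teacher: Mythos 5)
Your proposal is correct and follows essentially the same route as the paper: both rest on the quantitative input $|\{J,\tilde{G}\}(z)|\leq C\,J(z)^{2N+1}$ for $J(z)\leq 1$ (flatness along $\Sigma$ combined with homogeneity of degree $0$), followed by a differential inequality for $\Jcal(t)$. The only difference is technical bookkeeping: the paper integrates $\dot{\Jcal}\leq C\Jcal^{2N+1}$ explicitly by separation of variables to get the time window, while you bound $|\dot{\Jcal}|$ by the constant $D_N(2\Jcal(0))^{2N+1}$ and close with a continuity/bootstrap argument, which yields the same conclusion with equivalent constants.
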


\begin{proof}[Proof of Lemma \ref{lemm:firstint}.]
Since $\{ J, \tilde{G}\} = \dot{\mathrm{O}}_{\Sigma}(\infty)$ is homogenous of degree $0$, for any $N$ there exists $C>0$ such that 
$$
0\,<\,J(z)\,\leq\, 1 \Rightarrow 
\vert \{ J, \tilde{G}\} (z)\vert \leq C J(z)^{2N+1}.  
$$
It follows that $\dot{\Jcal}(t) \leq C \Jcal^{2N+1}(t)$ as long as
$\Jcal(t)\leq 1.$ 
Integrating, we get 
$$
\Jcal(t)\leq \frac{\Jcal(0)}{\left( 1- 2NC t \Jcal^{2N}(0) \right)^{1/2N}} ,
$$
as long as $(2NC  \Jcal^{2N}(0))t\leq 1/2$ and $\Jcal(t)\leq 1$. Hence, there exists $C_N>0$ (e.g., $C_N= \frac{2^{2N}-1}{2^{2N+1}NC}$)
such that 
$$
0\leq t \leq  C_N \frac{1}{\Jcal(0)^{2N}}\quad\textrm{and}\quad\Jcal(0)\leq
\frac{1}{2} \Rightarrow \Jcal(t) \leq 2 \Jcal(0).
$$
Therefore $J(t)\leq 2J(0)$. Using that $|J't)|\leq  C J^{2N+1}(t) \leq C (2J(0))^{2N+1}$, we get the second estimate (with $D_N= Z^{2N+1}CC_N$).
\end{proof} 

Using the fact that $\vec{\tilde{G}} - \vec{H}= \mathrm{O}_\Sigma (\infty )$, taking $a_1$ small enough, we assume that, in the decomposition $\vec{\tilde{G}}(z)= a\partial _y + V + b\partial _u + c\partial _v$, where $V $ is tangent to $ S_1 \times\{ y \} $, we have $a \geq J/4$ as long as $z$ is in $C_1$.

Lemma \ref{lemm:firstint} then implies that, for $\varepsilon>0$ small enough, if $t\leq \varepsilon (\Jcal(0))^{2N}$ then $\Jcal(t)\geq \frac{\Jcal(0)}{2}$ so that the flow $\tilde{G}_t$ is going out of $C_{1} $ within a time of order at most $\mathrm{O}(1/\Jcal(0))$ in both time directions.

\medskip

Following the method of Nelson, let us now define $\chi :\Sigma ^+ \times (\R^2\setminus \{0\}) \rightarrow  \Sigma ^+ \times (\R^2\setminus \{0\})$ by
$$
\chi (z)= \lim _{t\rightarrow  +\infty } (H_t \circ \tilde{G}_{-t})(z) .
$$
It is well defined and the limit is obtained within a time $\mathrm{O}(1/J(0)$, because as soon as $t_0$ is such that $G_{-t_0}(z)$ has left $C_1$, we have $H_t\circ \tilde{G}_{-t}(z) = H_{t_0} \circ H_{t-t_0}\circ G_{-t+t_0}\circ G_{-t_0}$ for $t>t_0$, and the flows of $H$ and $\tilde{G}$ coincide outside of $C_1$.
  
By definition we have $\chi \circ \tilde{G}_{t} =H_t$, so that $\chi\circ {G}_{t}(z)=H_t(z) $ for $t$ small and $z \in C_{0}$, since the flows of $G$ and $\tilde{G}$ coincide there.

Moreover, $\chi$ is a symplectomorphism of $\Sigma ^+ \times \R^2\setminus \{0\}$ whose inverse is given by $\chi^{-1}=  \lim _{t\rightarrow +\infty } (\tilde{G}_t \circ H_{-t})(z)$.

\medskip

Let us finally prove that we can extend $\chi $ to  $\Sigma ^+\times \R^2 $ by the identity on $\Sigma$ and obtain a smooth symplectomorphism that transforms $G$ to the desired Melrose normal form in $C_0$. The latter fact follows from the definition so that the only issue is the smooth extension. 

Let $\phi$ be a smooth function that is homogeneous of degree $0 $ on $\Sigma ^+ \times \R^2$. Following \cite[Section 3]{Ra-VN-13}, we compute the time derivative of $\mathcal{D}(t)(z)= \phi (H_t \circ \tilde{G}_{-t} (z) )$, given by
$$
\mathcal{D}'(t)= -\{ \tilde{G} ,  \mathcal{D}\} + \{ H\circ \tilde{G}_{-t} ,  \mathcal{D}    \}
=-\{ R\circ \tilde{G}_{-t} ,\mathcal{D} \}
=-\{  R , \phi \circ H_t \} \circ \tilde{G}_{-t} ,
$$
where we have used, successively, the invariance of $\tilde{G} $ under $\tilde{G}_t$, and the
 invariance of the Poisson bracket under $\tilde{G}_t$.
Using the explicit expression for $H_t$ and the fact that $J$ is preserved by the flow of $H_t$, we see that, as long as $J(z) \times t$ is bounded above, the differential of $H_t$ at $z$ is $\mathrm{O}(J(z)^{-2})$. It follows that the Poisson bracket $\{  R , \phi \circ H_t \}$ is $\mathrm{O}(J(z)^\infty)$ as long as $J(z)t$ is bounded. Using Lemma \ref{lemm:firstint}, it follows that $\mathcal{D}'(t)= \mathrm{O}(\Jcal(0)^\infty )$, and thus $| \mathcal{D}(t)-\mathcal{D}(0)|=\mathrm{O}(\Jcal(0)^\infty )$ as long as $\Jcal(0)t$ is bounded.

Locally in $z,$ in the expression giving $\chi(z)$ instead of the limit $t\rightarrow +\infty$ we can fix $t=T_0$ chosen of the order of $1/J(z)$. Then, we have $| \mathcal{D}(t)-\mathcal{D}(0)|=\mathrm{O}(\Jcal(0) ^\infty )$ for $t =\mathrm{O}(1/J(0))$. Choosing $\phi$ among a finite set of functions that give local coordinates in $C_1$, we obtain that $d (\chi(z), z) = \mathrm{O}(J^\infty )$ for some distance $d.$
We have then $d(z_0, \chi (z) )\leq d(z_0, z) + d(z, \chi(z)$ if $z_0 \in \Sigma \times \{0\}$.
This proves that $\chi$, extended by the identity on $\Sigma ^+ \times 0$, is continuous.

To finish, Nelson's trick consists of constructing extensions of these flows to the $k-$jets spaces. For example, if $k=1$, we consider the lifts of both flows to the tangent space $T(\Sigma^+ \times \R^2)$. The same properties as above are satisfied, and we get that $\chi $ can be extended smoothly and its differential is the identity on $\Sigma ^+ \times\{0\}$.
Hence $\chi$ can be extended to a diffeomorphism such that $\tilde{G}\circ \chi = H$ and, in particular, $G \circ \chi (z)= H(z) $ if $z\in C_{0}$. 
Theorem \ref{theo:Melrose-NF} is proved.

\section{Spiraling along  periodic  Reeb orbits}\label{sec:spiraling}
In this section, we show how the Melrose normal form along a Reeb orbit given in Theorem  \ref{theo:Melrose-NF} can be used to describe the spiraling property of geodesics around Reeb trajectories.

Considering the symplectomorphism $\chi$ given by Theorem  \ref{theo:Melrose-NF}, the geodesics of large initial momenta are images under $\chi ^{-1}$ of the curves 
$$
t \rightarrow \left( \mathcal{R}_{tJ_0/2} , J_0 e^{it/J_0 + \theta _0 } \right) ,
$$
with $t=\mathrm{O}(1/J_0)$.
In order to describe these images in a precise way, it is useful to compute the differential $\Xi $  of $\chi ^{-1}$ along $\Sigma ^+ \times \{0\}$.
Fixing a positive $g$-orthonormal basis $(X,Y)$ of $D$ which is \emph{parallel} along the Reeb flow
for the parallel transport defined in \ref{par:par}, we have
$$
\Xi_\sigma ( \delta \sigma, \delta u, \delta v)=  \delta \sigma + \frac{1}{\sqrt{\rho (\sigma)}}( \delta u \, \vec h_X+  \delta v \, \vec h_Y ) .
$$ 

To state the result below, it is more convenient to define a complex structure on $D$: the product by $i$ is defined as the rotation of angle $\pi /2$ with respect to the orientation of $D$ and the metric $g$.

\begin{theorem}\label{theo:spiraling}
Let $q_0\in M$ be arbitrary, and let $(q_0,p_0)\in T^\star_{q_0} M$ be the Cauchy data of a geodesic $t\mapsto\gamma (t)$ starting at $q_0$ with unit speed $\dot{\gamma }(0)=X_0 \in D(q_0)$. We assume that $h_0=h_Z (p_0) \gg 1$ (large initial momentum).

Then, there exists a point $Q_0=Q_0(q_0,p_0)\in M$ close to $q_0$, and a vector $Y_0 \in D_(Q_0)$ close to $X_0$, such that, denoting  by $\Gamma (\tau )= \mathcal{R}_{\tau }(Q_0)$ the Reeb orbit of $Q_0$, and by $Y(t)$ the parallel transport of $Y_0$ along $\Gamma $, 

we have, using the complex structure on $D$, for $t=\mathrm{O}(h_0)$, 
$$
\gamma (t)=  \Gamma (J_0t/2)-i J_0 e^{it/J_0}Y(J_0t/2)) + \mathrm{O}(J_0^2),
\qquad
\dot{\gamma}(t)=   e^{it/J_0} Y(J_0t/2)+  \mathrm{O}(J_0) ,
$$
with $J_0=h_0^{-1}+ \mathrm{O}(h_0^{-3}) $ and $Q_0= q_0 - i h_0^{-1}X_0  + \mathrm{O}(h_0^{-2})$.  
\end{theorem}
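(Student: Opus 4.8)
The plan is to transport the geodesic through the Melrose symplectomorphism $\chi$ of Theorem~\ref{theo:Melrose-NF}. Since $g^\star\circ\chi^{-1}=\rho I$, the geodesic flow of $G=\sqrt{g^\star}$ is conjugated by $\chi$ to the flow $H_t$ of $H=\sqrt{\rho I}$, whose explicit form was computed in Section~\ref{sec:Nelson}, namely $H_t(m,J,\theta)=(\mathcal{R}_{tJ/2}(m),J,\theta+t/J)$. Hence the geodesic issued from $(q_0,p_0)$ is $\gamma(t)=\pi_M\big(\chi^{-1}(z(t))\big)$ with $z(t)=H_t(\chi(q_0,p_0))$, where $\pi_M$ is the base-point projection; writing momenta in the $g$-orthonormal frame $(X,Y)$ of $D$ parallel along the Reeb flow (cf.\ \ref{par:par}), its velocity is $\dot\gamma(t)=(h_X\circ\chi^{-1})(z(t))\,X(\gamma(t))+(h_Y\circ\chi^{-1})(z(t))\,Y(\gamma(t))$. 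The proof then has two ingredients: (i) identify $\chi(q_0,p_0)$, i.e.\ $J_0$, the base point $Q_0$, the cone parameter $s_0=\rho(\chi(q_0,p_0))$ and the angle $\theta_0$, in terms of $(q_0,p_0)$; (ii) Taylor-expand $\chi^{-1}$ and $h_X\circ\chi^{-1}$, $h_Y\circ\chi^{-1}$ in the transverse variables $(u,v)$ to second order, using the first jet $\Xi$ of $\chi^{-1}$ along $\Sigma^+\times\{0\}$ displayed before the statement, and control the remainders.

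For (i): because $\chi$ is the identity on $\Sigma^+$, with differential $\Xi$ there, setting $\chi(q_0,p_0)=(\sigma_0,u_0+iv_0)$ with $\sigma_0=(Q_0,s_0\alpha_g(Q_0))$ gives $(q_0,p_0)=\sigma_0+\tfrac1{\sqrt{s_0}}\big(u_0\,\vec h_X+v_0\,\vec h_Y\big)(\sigma_0)+\mathrm{O}(|u_0+iv_0|^2)$. Projecting onto $M$ and using that $\vec h_X$, $\vec h_Y$ project (up to sign) to $X$, $Y$ yields $Q_0=q_0-\tfrac1{\sqrt{s_0}}(u_0X+v_0Y)(q_0)+\mathrm{O}(|u_0+iv_0|^2)$. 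Next, $g^\star(q_0,p_0)=1$ together with $g^\star=\rho I\circ\chi$ gives $s_0(u_0^2+v_0^2)=1$; the relations $[X,Z],[Y,Z]\in D$ give $\{h_X,h_Z\}|_\Sigma=\{h_Y,h_Z\}|_\Sigma=0$, hence $h_Z\circ\chi^{-1}=\rho+\mathrm{O}_\Sigma(2)$ and $s_0=h_0+\mathrm{O}(h_0^{-1})$; and $[X,Y]=-Z\bmod D$ gives $\{h_X,h_Y\}|_\Sigma=\pm\rho$, hence $h_X\circ\chi^{-1}=\pm v\sqrt\rho+\mathrm{O}_\Sigma(2)$ and $h_Y\circ\chi^{-1}=\mp u\sqrt\rho+\mathrm{O}_\Sigma(2)$. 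Since $\dot\gamma(0)=X_0$ fixes $h_X(q_0,p_0)=g(X_0,X(q_0))$, $h_Y(q_0,p_0)=g(X_0,Y(q_0))$, these relations determine $\sqrt{s_0}\,(u_0+iv_0)$ as a unit multiple of $X_0$ read in the frame at $q_0$, up to a fixed rotation set by the sign conventions of the appendices, modulo $\mathrm{O}(h_0^{-1})$. A short computation with the complex structure on $D$ then gives $\tfrac1{\sqrt{s_0}}(u_0X+v_0Y)(q_0)=i\,h_0^{-1}X_0+\mathrm{O}(h_0^{-2})$, whence $J_0=|u_0+iv_0|/\sqrt{s_0}=h_0^{-1}+\mathrm{O}(h_0^{-3})$, $Q_0=q_0-i\,h_0^{-1}X_0+\mathrm{O}(h_0^{-2})$, and $\theta_0=\arg(u_0+iv_0)$.

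For (ii) and the conclusion: on $\Sigma^+\times\R^2$ the factors $\sigma$ and $(u,v)$ are symplectically orthogonal, so $\{\rho,I\}=0$ and therefore $\rho$ and $I$, hence also $J=\sqrt{I/\rho}$, are first integrals of $H_t$; combined with the explicit form of $H_t$ this gives $z(t)=\big(\mathcal{R}_{tJ_0/2}(Q_0),\,s_0\alpha_g,\,(u_0+iv_0)e^{it/J_0}\big)$. Plugging $z(t)$ into $\pi_M\circ\chi^{-1}$ and into $h_X\circ\chi^{-1}$, $h_Y\circ\chi^{-1}$, and keeping the terms linear in $(u,v)$ from the $\Xi$-expansion, one obtains, after inserting $|u_0+iv_0|=J_0\sqrt{s_0}$ and $s_0=h_0+\mathrm{O}(h_0^{-1})$, exactly $\Gamma(J_0t/2)-iJ_0e^{it/J_0}\,Y(J_0t/2)$ and $e^{it/J_0}\,Y(J_0t/2)$, where $\Gamma(\tau)=\mathcal{R}_\tau(Q_0)$, $Y_0$ is the unit vector of $D(Q_0)$ obtained from $X(Q_0)$ by the constant rotation encoding $\theta_0$ (and the fixed $\pm\pi/2$ of the conventions), which is $\mathrm{O}(J_0)$-close to $X_0$, and $Y(\cdot)$ is the parallel transport of $Y_0$ along $\Gamma$; it is precisely here that the use of a frame $(X,Y)$ parallel along the Reeb flow makes $Y(\cdot)$ a genuine parallel transport rather than a $\sigma$-dependent rotation of one. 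The remainders are estimated by homogeneity: $\chi^{-1}$ is homogeneous for the cone structure $\lambda\cdot(q,s\alpha_g,u,v)=(q,\lambda s\alpha_g,\sqrt\lambda u,\sqrt\lambda v)$, so the $k$-th Taylor coefficient in $(u,v)$ of $\pi_M\circ\chi^{-1}$ (degree $0$) is homogeneous of degree $-k/2$ in $\sigma$, and the corresponding coefficient of $h_X\circ\chi^{-1}$, $h_Y\circ\chi^{-1}$ (degree $1$) of degree $1-k/2$; in particular the quadratic term of $\pi_M\circ\chi^{-1}$ carries a factor $\sim\rho^{-1}\sim h_0^{-1}$ and contributes $\mathrm{O}(h_0^{-1}|u_0+iv_0|^2)=\mathrm{O}(J_0^2)$, the quadratic term of $h_X\circ\chi^{-1}$ contributes $\mathrm{O}(|u_0+iv_0|^2)=\mathrm{O}(J_0)$, and higher-order terms are smaller. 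For $t=\mathrm{O}(h_0)=\mathrm{O}(1/J_0)$ the orbit $z(t)$ stays in a fixed compact conic part of the Melrose chart — this is the content of Lemma~\ref{lemm:firstint} and the discussion following it, which keep $J$ comparable to $J_0$ for times far beyond $\mathrm{O}(1/J_0)$ — so the remainder bounds hold uniformly in $t$, which yields the theorem.

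The main obstacle will be this last remainder control: a priori $|u_0+iv_0|^2$ is only $\mathrm{O}(J_0)$, so the naive configuration-space error would be $\mathrm{O}(J_0)$; the point is the gain of one extra power $\rho^{-1}\sim h_0^{-1}$ in the quadratic Taylor coefficient of $\pi_M\circ\chi^{-1}$, forced by the conic homogeneity of $\chi^{-1}$, which upgrades it to $\mathrm{O}(J_0^2)$. Keeping track of the several interlocking signs (those of $\omega$, of the Hamiltonian vector fields, and of $[X,Y]=-Z\bmod D$) so as to pin down exactly the constant rotation relating $Y_0$ to $X_0$, and the uniformity of the remainders over $t=\mathrm{O}(h_0)$, are the other points requiring care but are otherwise routine given Theorems~\ref{theo:Melrose-NF} and~\ref{theo:LS}.
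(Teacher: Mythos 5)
Your proposal follows essentially the same route as the paper's proof: conjugate the geodesic flow by the Melrose symplectomorphism $\chi$ of Theorem \ref{theo:Melrose-NF}, use the explicit flow of $H=\sqrt{\rho I}$, identify $J_0$ and $Q_0$ from $g^\star(q_0,p_0)=1$ and $h_Z=\rho+\mathrm{O}_\Sigma(2)$, and expand $\chi^{-1}$ through its first jet $\Xi$ along $\Sigma^+\times\{0\}$, with the covering by finitely many normal-form charts (matched via Theorem \ref{theo:LS}) handling times $t=\mathrm{O}(h_0)$. Your write-up is in fact more detailed than the paper's very terse argument; the only superfluous ingredient is the appeal to Lemma \ref{lemm:firstint}, since in the exact normal form $J$ is an exact first integral of the flow of $H$.
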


In other words, this result says that, on time intervals of length of the order of $1/h_0$, the geodesics spiral along orbits of the Reeb flow taken for time intervals $\mathrm{O}(1)$. 

\begin{proof}
We have $g^\star (q_0,p_0 )=1$ and thus $J_0=1/\rho _0$. Using that $h_Z= \rho + \mathrm{O}_\Sigma (2)$, we get that $J_0=h_0^{-1}+ \mathrm{O}(h_0^{-3})$. Setting  $z(t)=G_t(q_0,p_0)$, we have
$$
z(t)= \chi^{-1}\left(\mathcal{R}_{J_0t/2}(Q_0), \frac{1}{J_0}\alpha _g (Q_0),J_0e^{it/J_0} \right).
$$
This defines $Q_0$ and we have $\gamma (t)= \pi(z(t))$, where $\pi:T^\star M \rightarrow M $ is the canonical projection. The result now follows by taking (if necessary) a covering of the Reeb trajectory by a finite number of charts in which the normal form is valid.
\end{proof}

\section{A conjecture on periodic geodesics}
We consider a periodic orbit $\Gamma$ of the Reeb flow on $M$ of primitive period $T_0>0$.
In this section, using the normal form, we derive an approximation of the return Poincar\'e first return map of the geodesic flow, for geodesics spiraling around $\Gamma$ and that are almost closed within time $T_0/J_0$.
Using this, we give a conjecture on the lengths of long closed geodesics close to $\Gamma $.

Along $\Gamma$, the fiber bundle $D$ is trivial. We then consider a $g$-orthonormal frame of $D(\Gamma (0))$, that we transport in a parallel way along $\Gamma$. 
In this way, we obtain a monodromy of angle $\alpha _0$.
Let us consider geodesics spiraling around $\Gamma $ that are close to $\gamma_0(t) = \mathcal{R}_{J_0t/2}(\Gamma (0))-i J_0e^{it/J_0} Y(J_0t/2)$.
The conditions for $\gamma _0 $ to be a periodic integral curve of $\overrightarrow{\rho I} $ of
period $T_{j,k}$, covering  $j$ times $\Gamma$ and winding $k$ times around $\Gamma$, are given by $J_0T_{j,k}=2jT_0$ and $\frac{T_{j,k}}{J_0} + j\alpha_0 =2k\pi$.
It follows that
$$
T_{j,k} = 2\sqrt{jk\pi T_0  \left(1-\frac{j\alpha_0}{2k\pi}\right)} .
$$
We formulate the following conjecture:
\begin{quote}
\textit{If the Reeb periodic orbit is non degenerated, then for every $j\in\N\setminus\{0\}$ there exists a sequence of closed geodesics covering $j$ times $\Gamma$ with lengths close to $T_{j,k}$ for $k$ large enough.}
\end{quote}

This conjecture is consistent with our computation made in the Heisenberg flat case in \cite[Section 3.1]{CHT-I} and with the case of the sphere $S^3$ computed in \cite{KV-15}.
We guess that our conjecture holds true at least if $I$ is globally defined, which is the case in the example described in \cite{CHW-15}. 


\appendix

\section{Appendix}
\subsection{Sign conventions in symplectic geometry}\label{app:symplectic}
Since there are several possible sign conventions in the Hamiltonian formalism, we fix them as follows.

Given a smooth finite-dimensional manifold $M$, the canonical symplectic form on the cotangent bundle $T^*M$ is $\omega =dq \wedge dp = -d\theta$ with $\theta =p\, dq$ in local symplectic coordinates $(q,p)$.

Given a (smooth) Hamiltonian function $h$, the associated Hamiltonian vector field $\vec h$ is defined by $\iota_{\vec{h}} \omega = \omega(\vec h,\cdot) = dh$. In local coordinates, we have $\vec h = (\partial_p h,-\partial_x h)$. 

The Poisson bracket of two Hamiltonian functions $f$ and $g$ is defined by $\{ f,g \} = \omega(\vec f,\vec g)=df. \vec g = -dg.\vec f$. In local coordinates, we have $\{f,g\} = \partial_q f \partial_p g - \partial_p f \partial_q g$. We have $\overrightarrow{\{f,g\}} = - [\vec f,\vec g]$.

Given a vector field $X$ on $M$, the Hamiltonian lift is the function defined by $h_X(q,p) = \langle p,X(q)\rangle$. Given two vector fields $X$ and $Y$ on $M$, we have $\{h_X,h_Y\} = -h_{[X,Y]}$.

\subsection{A useful lemma for homogeneous polynomials}\label{app:linearalgebra}
Given any integer $k$, we define $P_k$ as the set of $k$-homogeneous polynomials in two variables $(u,v)$.
We define
\begin{equation*}
\begin{split}
P_k^0 &= \left\{ Q\in P_k\ \mid\ \int_0^{2\pi} Q(\cos\theta,\sin\theta)\, d\theta = 0 \right\} , \\
P_k^{\textrm{inv}} &= \{ c (u^2+v^2)^{k/2} \ \mid \ c\in\R\}.
\end{split}
\end{equation*}
The set $P_k^0$ is the set of $k$-homogeneous polynomials having zero average along the circle $u^2+v^2=1$.

We endow $P_k$ with the scalar product $\langle Q_1,Q_2\rangle = \frac{1}{2\pi} \int_0^{2\pi} Q_1(\cos\theta,\sin\theta)Q_2(\cos\theta,\sin\theta)\, d\theta$. Note that $P_k^0\perp P_k^{\textrm{inv}}$.
Considering polar coordinates, defining the endomorphism $A$ of $P_k$ defined by $A = u\partial_v - v\partial_u$ (or equivalently, considering the operator $\partial_\theta$), we have the following immediate result.

\begin{lemma}
\begin{itemize}
\item If $k$ is odd then $P_k = P_k^0$ and $A$ is invertible.
\item If $k$ is even then $P_k = P_k^0 \oplus P_k^{\textrm{inv}}$, and we have $\mathrm{range}(A) = P_k^0$ and $\ker(A)=P_k^{\textrm{inv}}$.
\end{itemize}
\end{lemma}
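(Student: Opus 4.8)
The plan is to diagonalize $A$ over $\C$ by passing to the complex coordinates $z=u+iv$, $\bar z=u-iv$ and identifying $P_k$ (extended to complex coefficients) with a space of trigonometric polynomials via restriction to the unit circle. First I would record the basic isomorphism: the monomials $z^a\bar z^b$ with $a+b=k$, $a,b\geq 0$, form a $\C$-basis of the space of complex $k$-homogeneous polynomials in $(u,v)$, and restricting to $u=\cos\theta$, $v=\sin\theta$ sends $z^a\bar z^b$ to $e^{i(a-b)\theta}$. Since the exponents $a-b=2a-k$ run exactly over $\{-k,-k+2,\dots,k\}$ and the corresponding exponentials are linearly independent, restriction to the circle (followed by homogeneous extension of degree $k$) is a linear isomorphism onto $\mathrm{span}_{\C}\{e^{im\theta}\ \mid\ |m|\leq k,\ m\equiv k\ \pmod{2}\}$; in particular $\dim P_k=k+1$ and a nonzero element of $P_k$ cannot vanish on the circle.

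Next I would compute the action of $A=u\partial_v-v\partial_u$ in these coordinates. A direct calculation gives $Az=iz$ and $A\bar z=-i\bar z$, hence $A(z^a\bar z^b)=i(a-b)\,z^a\bar z^b$; equivalently, under the above identification $A$ corresponds to $\frac{d}{d\theta}$ acting on trigonometric polynomials. Thus $A$ is diagonalizable with simple eigenvalues $\{im\ \mid\ |m|\leq k,\ m\equiv k\ \pmod{2}\}$. The eigenvalue $0$ occurs if and only if $k$ is even, and then the associated eigenline is spanned by $z^{k/2}\bar z^{k/2}=(u^2+v^2)^{k/2}$, i.e.\ by $P_k^{\textrm{inv}}$. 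This already yields that $A$ is invertible when $k$ is odd, and that $\ker(A)=P_k^{\textrm{inv}}$ when $k$ is even.

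It then remains to identify $\mathrm{range}(A)$ and the direct sum. I would observe that $A$ is skew-adjoint for the inner product $\langle\cdot,\cdot\rangle$: since $A$ acts as $\partial_\theta$ on the circle restrictions, integration by parts over $[0,2\pi]$ (the boundary terms cancelling by periodicity) gives $\langle AQ_1,Q_2\rangle=-\langle Q_1,AQ_2\rangle$, whence $\mathrm{range}(A)=(\ker A)^{\perp}$. Moreover the average $\frac{1}{2\pi}\int_0^{2\pi}Q(\cos\theta,\sin\theta)\,d\theta$ is exactly the zeroth Fourier coefficient of the restriction of $Q$, i.e.\ the component of $Q$ along the eigenline $P_k^{\textrm{inv}}$, so by definition $P_k^0=(P_k^{\textrm{inv}})^{\perp}$. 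When $k$ is odd, $P_k^{\textrm{inv}}=0$, hence $P_k=P_k^0$ and $\mathrm{range}(A)=(\ker A)^\perp=P_k$. When $k$ is even, $P_k=P_k^{\textrm{inv}}\oplus(P_k^{\textrm{inv}})^{\perp}=P_k^{\textrm{inv}}\oplus P_k^0$ and $\mathrm{range}(A)=(\ker A)^{\perp}=(P_k^{\textrm{inv}})^{\perp}=P_k^0$, which is the claim.

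There is no serious obstacle; this is a finite-dimensional linear-algebra statement. The only points deserving care are (i) checking that restriction to the unit circle is injective on $P_k$, which follows from the exponent count above (or from unique factorization of $u^2+v^2$), and (ii) tracking the parity constraint $m\equiv k\ \pmod{2}$ so that $0$ appears as an eigenvalue of $A$ precisely when $k$ is even.
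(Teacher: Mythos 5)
Your proof is correct and takes essentially the same route as the paper, which states the lemma as immediate after observing that $A$ coincides with $\partial_\theta$ in polar coordinates; your Fourier-mode diagonalization ($A(z^a\bar z^b)=i(a-b)z^a\bar z^b$) together with the skew-adjointness argument for $\mathrm{range}(A)=(\ker A)^\perp$ simply makes that observation precise. Nothing is missing.
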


\begin{remark}\label{rem:linearalgebra}
Setting $I(u,v)=u^2+v^2$, endowing $\R^2_{u,v}$ with the symplectic form $du\wedge dv$, it is useful to note that, given any $Q\in P_k$, we have
$$
\{ I, Q \} = u\partial_v Q - v\partial_u Q = AQ.
$$
In particular, when $k$ is odd, we have $\{ I, Q \}=0$ if and only if $Q=0$.
\end{remark}



\begin{thebibliography}{99}

\bibitem{Mazzucchelli}
A. Abbondandolo, L. Macarini, M. Mazzucchelli, G. P. Paternain,
\textit{Infinitely many periodic orbits of exact magnetic flows on surfaces for almost every subcritical energy level},
to appear in J. Eur. Math. Soc.

\bibitem{yCdV-17}
Y. Colin de Verdi\`ere,
\textit{Magnetic fields and sub-Riemannian geometry},
In preparation (2017).

\bibitem{CHT-I}
Y. Colin de Verdi\`ere, L. Hillairet,  E. Tr\'elat,
\textit{Spectral asymptotics for sub-Riemannian Laplacians I: Quantum ergodicity and quantum limits in the 3D contact case},
Duke Math.
J., {\bf 167(1)}:109--174 (2018).


\bibitem{CHW-15}
Y. Colin de Verdi\`ere, J. Hilgert, T. Weich,
\textit{Irreducible representations of $SL_2(\R)$ and the Peyresq's operators},
Work in progress.

\bibitem{IS-02}
I. Iantchenko, J. Sj\"ostrand,
\textit{Birkhoff normal forms for Fourier Integral Operators II},
American J. Math. {\bf 124 (4)} (2002), 817--850.

\bibitem{KV-15}
D.  Klapheck, M. VanValkenburgh, 
\textit{The length spectrum of the sub-Riemannian three-sphere},
Preprint ArXiv:1507.03041 (2015).

\bibitem{Le-39} D. Lewis,
\textit{Formal power series transformations}
Duke Math. J. {\bf 5} (1939), 794--805. 

\bibitem{Me-84}
R. B. Melrose,
\textit{The wave equation for a hypoelliptic operator with symplectic characteristics of codimension two},
J. Analyse Math. {\bf 44} (1984-1985), 134--182.

\bibitem{Mo-02}
R. Montgomery,
\textit{A tour of subriemannian geometries, their geodesics and applications},
Mathematical Surveys and Monographs {\bf 91}, American Mathematical Society, Providence, RI, 2002.

\bibitem{Nelson}
E. Nelson,
\textit{Topics in dynamics I: flows},
Mathematical Notes, Princeton University Press, Princeton, N.J.; University of Tokyo Press, Tokyo, 1969, iii+118 pp.

\bibitem{Ra-VN-13}
N. Raymond,  S. V{\~u} Ng{\d o}c,
\textit{Geometry and spectrum in 2D magnetic wells},
Ann. Inst. Fourier (Grenoble) {\bf 65} (2015), no. 1, 137--169. 

\bibitem{St-50}
S. Sternberg,
\textit{Infinite Lie groups and formal aspects of dynamical systems},
J. Math. Mech. {\bf 10} (1961), 451--474. 



\end{thebibliography}
\end{document}